	\theoremstyle{definition}
	\newtheorem{thm}{Theorem}
	\newtheorem{prop}[thm]{Proposition} 
	\newtheorem{lem}[thm]{Lemma}
\newcommand{\ignore}[1]{}
\begin{document}

\title{Noncommutative recursions and the Laurent phenomenon}
\author{Matthew C. Russell\footnote{Department of Mathematics, Rutgers, The State University of New Jersey, Piscataway, NJ
08854. Email: {\tt russell2 [at] math [dot] rutgers [dot] edu}}
}
\date{\today}
\maketitle

\begin{abstract}
We exhibit a family of sequences of noncommutative variables, recursively defined using monic palindromic polynomials in $\mathbb Q \left[x\right]$, and show that each possesses the Laurent phenomenon. This generalizes a conjecture by Kontsevich.
\end{abstract}

\section{Introduction}
Let $K_r$ (the Kontsevich map) be the automorphism of a noncommutative plane defined by 
\begin{equation*}
K_r:\left(x,y\right) \mapsto \left(xyx^{-1},\left(1+y^r\right)x^{-1}\right).
\end{equation*}
Maxim Kontsevich conjectured that, for any $r_1,r_2 \in \mathbb N$, the iterates 
\begin{align*}
\dots K_{r_2} K_{r_1} K_{r_2} K_{r_1} \left(x,y\right)
\end{align*}
are all given by noncommutative Laurent polynomials in $x$ and $y$. This is known as the Laurent phenomenon. The conjecture was proved in special cases for certain values of $r_1$ and $r_2$ (see~\cite{Us09},~\cite{Us10},~\cite{DiFK10}, and~\cite{DiFK11}), sometimes also with the positivity conjecture (that all of the Laurent polynomials have nonnegative integer coefficients), and sometimes replacing $1+y^r$ with any monic palindromic polynomial. Eventually, Berenstein and Retakh~\cite{BR11} gave an elementary proof of the Kontsevich conjecture for general $r_1$ and $r_2$. while Rupel~\cite{Rup12} subsequently proved it using the Lee-Schiffler Dyck path model (see~\cite{LS}) while also settling the positivity conjecture.

Later, Berenstein and Retakh~\cite{BR13} extended their methods to consider a more general class of recurrences given by $Y_{k+1} Y_{k-1} = h_{k}\left(a_{k-1,k}Y_k a_{k,k+1}\right)$, where $h_k \in \mathbb Q\left[x\right]$ and $h_k\left(x\right)=h_{k-2}\left(x\right)$ for all $k \in \mathbb Z$, $Y_1 a_{12} Y_2 a_{23} = a_{32} Y_2 a_{21} Y_1$, and $a_{k,k\pm 1}$ are defined recursively by $a_{k+2,k+1} = a_{k-1,k}^{-1}$ and $a_{k+1,k+2}=a_{k,k+1}^{-1}$. Proceeding in a similar fashion to their previous paper, they prove the Laurent phenomenon for these recurrences where $h_{k}=1+x^{r_k}$.

In this paper, we endeavor to expand the methods of Berenstein and Retakh~\cite{BR13} to higher-order recurrences, and to using monic palindromic polynomials instead of $1+x^r$.

\section{Preliminaries}
Following section 4 of Berenstein and Retakh~\cite{BR13}, let $K \ge 2$. Let $\mathcal F_K$ denote the $\mathbb Q$-algebra generated by $a_{1} ^{\pm 1}$, $a_{2} ^{\pm 1}$, $\dots$, $a_{K} ^{\pm 1}$, $b_{1} ^{\pm 1}$, $b_{2} ^{\pm 1}$, $\dots$, $b_{K} ^{\pm 1}$, and let $\mathcal F_K\left(Y_1,\dots,Y_K\right)$ denote the algebra generated by $\mathcal F_K$ and $Y_1,\dots,Y_K$, subject to the relation
\begin{equation}\label{eq:rel1}
Y_1 a_{1} Y_2 a_{2} \cdots Y_K a_{K} = b_{K} Y_K b_{K-1} Y_{K-1} \cdots b_{1} Y_1.
\end{equation}
For $n \in \mathbb{Z}$, define $a_{n}$ and $b_{n}$ recursively by
\begin{align} 
a_{n+K} &= b_{n} ^{-1} \label{eq:rec2} \\
b_{n+K} &= a_{n} ^{-1}. \label{eq:rec3}
\end{align}
Suppose we have a sequence of monic palindromic polynomials $h_n \in \mathbb Q\left[x\right]$ such that $h_n = h_{n-K}$ for all $n \in \mathbb{Z}$. Let us write $h_n\left(x\right) = \sum_{i=0} ^{d_n} P_{n,i} x^i$, so $P_{n,0}=P_{n,d_n}=1$ for all $n$. Define
\begin{align*}
&h_n ^{\downarrow} \left(x\right) \quad = \quad  \sum_{i=1} ^{d_n} P_{n,i} x^i \quad = \quad h_n \left(x\right)-1 \\
&h_n ^{\downarrow\downarrow} \left(x\right) \quad =\quad  \sum_{i=1} ^{d_n} P_{n,i} x^{i-1} \quad = \quad \frac{h_n \left(x\right)-1}x
\end{align*}
and recursively define $Y_n \in \mathcal F_K\left(Y_1,\dots,Y_K\right)$ for $n \in \mathbb{Z} \setminus \left\{1,\dots,K\right\}$ by 
\begin{equation} \label{eq:rec1}
Y_{n+K}Y_n = h_n\left(a_{n} Y_{n+1} a_{n+1} Y_{n+2} \cdots Y_{n+K-1} a_{n+K-1}\right).
\end{equation}
Define
\begin{align*}
Y_{n,m}^- &= a_{n-1} Y_n a_{n}Y_{n+1} \cdots a_{m-1} Y_m a_{m} & n\le m \\
Y_{n,m}^+ &= b_{n} Y_n b_{n-1}Y_{n-1} \cdots b_{m} Y_m b_{m-1} & n\ge m,
\end{align*}
while also defining $Y_{n,n-1}^- = a_{n-1}$ and $Y_{n,n+1}^+ = b_{n}$.
Then,~\eqref{eq:rec1} becomes
\begin{equation}\label{eq:rec6}
Y_{n+K} Y_n = h_n\left(Y_{n+1,n+K-1}^-\right).
\end{equation}
\begin{prop}
For all $n \in \mathbb{Z}$, we also have
\begin{align}
Y_n Y_{n+K} &= h_n\left(Y_{n+K-1,n+1} ^+ \right) \label{eq:rec4} \\
Y_n Y_{n+1,n+K-1}^- &= Y_{n+K-1,n+1}^+ Y_n \label{eq:rec5}.
\end{align}
\end{prop}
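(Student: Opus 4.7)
We prove \eqref{eq:rec5} and \eqref{eq:rec4} jointly, by a two-way induction on $n$ anchored at $n = 1$, where \eqref{eq:rec5} reads exactly as the defining relation \eqref{eq:rel1}. Throughout we abbreviate $U = Y_{n+1,n+K-1}^-$ and $V = Y_{n+K-1,n+1}^+$, and work in the localization where each $Y_n$ is invertible (implicit already in the recursion \eqref{eq:rec1}, which defines $Y_{n+K}$ in terms of $Y_n^{-1}$).

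The key observation is that \eqref{eq:rec5} at index $n$ implies \eqref{eq:rec4} at the same index. Indeed, $Y_n U = V Y_n$ says that conjugation by $Y_n$ sends $U$ to $V$; being an inner automorphism, it therefore sends $h_n(U)$ to $h_n(V)$, and combining with \eqref{eq:rec6} in the form $Y_{n+K} = h_n(U)\,Y_n^{-1}$ yields $Y_n Y_{n+K} = Y_n\, h_n(U)\, Y_n^{-1} = h_n(V)$, which is \eqref{eq:rec4}. In particular, the base case $n = 1$ of \eqref{eq:rec4} follows from \eqref{eq:rel1} in this way.

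For the forward step, assume \eqref{eq:rec5} and \eqref{eq:rec4} both hold at index $n$. Using $a_{n+K} = b_n^{-1}$ and $b_{n+K} = a_n^{-1}$ from \eqref{eq:rec2}--\eqref{eq:rec3}, a direct factorization of the endpoints shows that \eqref{eq:rec5} at index $n+1$ is equivalent to the bare commutation $U \cdot Y_{n+K} = Y_{n+K} \cdot V$. Substituting $U = Y_n^{-1} V Y_n$ (from the inductive \eqref{eq:rec5}) and $Y_{n+K} = Y_n^{-1} h_n(V)$ (from the inductive \eqref{eq:rec4}) collapses the desired commutation to $V \cdot h_n(V) = h_n(V) \cdot V$, which is automatic. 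The backward step is symmetric: using \eqref{eq:rec6} at index $n-1$ and the same kind of endpoint manipulation, \eqref{eq:rec5} at index $n-1$ reduces to the commutation $Y_{n,n+K-2}^- \cdot Y_{n+K-1} = Y_{n+K-1} \cdot Y_{n+K-2,n}^+$, which in turn peels down to \eqref{eq:rec5} at index $n$ after identifying the outer $a$'s and $b$'s via \eqref{eq:rec2}--\eqref{eq:rec3}.

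The principal obstacle is not conceptual but combinatorial: one must patiently verify that shifting the index in \eqref{eq:rec5} by $\pm 1$ really does factor into the claimed commutation after the boundary generators $a_n, a_{n+K-1}, b_n, b_{n+K-1}$ are rewritten via the recursions \eqref{eq:rec2}--\eqref{eq:rec3}. Once this bookkeeping is in place, the entire proposition rests on the single trivial observation that $V$ commutes with every polynomial in $V$.
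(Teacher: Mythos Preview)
Your argument is correct and matches the paper's proof essentially line for line: the paper also anchors the induction at $n=1$ via \eqref{eq:rel1}, derives \eqref{eq:rec4} from \eqref{eq:rec5} by conjugating \eqref{eq:rec6}, rewrites \eqref{eq:rec5} at $n+1$ as the bare commutation $Y_{n+1,n+K-1}^- Y_{n+K} = Y_{n+K} Y_{n+K-1,n+1}^+$ using $a_{n+K}=b_n^{-1}$ and $b_{n+K}=a_n^{-1}$, and then collapses this to $V h_n(V)=h_n(V)V$ exactly as you do. Your sketch of the backward step (which the paper omits with ``the proof for $n<1$ is similar'') is also accurate, since \eqref{eq:rec6} at $n-1$ lets you write $Y_{n-1}=Y_{n+K-1}^{-1}h_{n-1}(Y_{n,n+K-2}^-)$ and the needed commutation is precisely the endpoint-stripped form of \eqref{eq:rec5} at $n$.
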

\begin{proof}
We proceed by induction on $n$. (We will only prove these for $n \ge 1$; the proof for $n<1$ is similar.) Note that~\eqref{eq:rel1} gives us the base case of~\eqref{eq:rec5} for $n=1$. Now, suppose that~\eqref{eq:rec5} holds for some $n\ge 1$.  Conjugating~\eqref{eq:rec6} on the left by $Y_n$ gives
\begin{align*}
Y_nY_{n+K} = Y_n h_n\left(Y_{n+1,n+K-1}^-\right) Y_n ^{-1},
\end{align*}
but, since
\begin{align*}
Y_n Y_{n+1,n+K-1}^- Y_n ^{-1} = Y_{n+K-1,n+1}^+ Y_n Y_n ^{-1}  = Y_{n+K-1,n+1}^+ 
\end{align*}
(by the inductive hypothesis), we conclude that
\begin{align*}
Y_n Y_{n+K} &= h_n\left(Y_{n+K-1,n+1} ^+ \right),
\end{align*}
which is~\eqref{eq:rec4}.

Now, we desire to prove~\eqref{eq:rec5} for $n+1$. 
This is equivalent to proving $Y_{n+1,n+K-1}^-Y_{n+K}=Y_{n+K}Y_{n+K-1,n+1}^+$
(multiply each side by $a_n^{-1}=b_{n+K}$ on the left and $a_{n+K} = b_n^{-1}$ on the right to recover the original expression).
We calculate
\begin{align*}
Y_{n+1,n+K-1}^-Y_{n+K} &= Y_n ^{-1} Y_{n+K-1,n+1} ^+  Y_n Y_{n+K} \\
&= Y_n ^{-1} Y_{n+K-1,n+1} ^+ h_n\left(Y_{n+K-1,n+1} ^+ \right) \\
&= Y_n ^{-1} h_n\left(Y_{n+K-1,n+1} ^+ \right) Y_{n+K-1,n+1} ^+  \\
&= Y_n ^{-1} Y_n Y_{n+K} Y_{n+K-1,n+1} ^+  \\
&=  Y_{n+K} Y_{n+K-1,n+1} ^+,
\end{align*}
using ~\eqref{eq:rec5} (the inductive hypothesis) and~\eqref{eq:rec4}.
\end{proof}

\section{Results}
Let $\mathcal A_n$ be the subalgebra of $\mathcal F_K\left(Y_1,\dots,Y_K\right)$ generated by $\mathcal F_K$ and $Y_n, \dots, Y_{n+2K-1}$. We now state our main result.
\begin{thm}
For all $n \in \mathbb{Z}$, $\mathcal{A}_n = \mathcal{A}_0$.
\end{thm}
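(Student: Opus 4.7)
My plan is to prove $\mathcal{A}_n = \mathcal{A}_0$ by induction on $|n|$, with the key step being $\mathcal{A}_{n+1}=\mathcal{A}_n$ (the negative direction being handled symmetrically). Since both subalgebras already share the generating set $\mathcal{F}_K\cup\{Y_{n+1},\dots,Y_{n+2K-1}\}$, the step reduces to the two memberships $Y_{n+2K}\in\mathcal{A}_n$ and $Y_n\in\mathcal{A}_{n+1}$.

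For $Y_{n+2K}\in\mathcal{A}_n$, I would apply \eqref{eq:rec6} at index $n+K$:
\[
Y_{n+2K}\,Y_{n+K}=h_{n+K}\!\left(Y_{n+K+1,n+2K-1}^{-}\right).
\]
The right-hand side involves only $Y_{n+K+1},\dots,Y_{n+2K-1}$ and elements of $\mathcal{F}_K$, so it lies in $\mathcal{A}_n$. Coupling this with the $n$-shift $Y_{n+K}\,Y_n=h_n(Y_{n+1,n+K-1}^{-})$, associativity yields the identity
\[
Y_{n+2K}\,h_n\!\left(Y_{n+1,n+K-1}^{-}\right)=h_{n+K}\!\left(Y_{n+K+1,n+2K-1}^{-}\right)\,Y_n,
\]
from which the goal becomes to deduce $Y_{n+2K}\in\mathcal{A}_n$. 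The argument for $Y_n\in\mathcal{A}_{n+1}$ is exactly parallel, using \eqref{eq:rec4} at $n$ together with \eqref{eq:rec6} at $n+K$ and associating in the opposite direction.

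The main obstacle is extracting $Y_{n+2K}$ (resp.\ $Y_n$) from the above identity, which naively requires inverting $Y_{n+K}$ or the element $h_n(Y_{n+1,n+K-1}^{-})$. The two tools I expect to use to overcome this are the intertwining identity \eqref{eq:rec5}, which lets us freely pass between the $^-$ and $^+$ decorations on the monomials $Y_{\ast,\ast}^{\pm}$ that appear, and the palindromicity of $h_n$, which via the symmetry $h_n(x)=x^{d_n}h_n(x^{-1})$ converts negative-power contributions back into positive ones after multiplication by a suitable power of the argument. Applying \eqref{eq:rec4}, \eqref{eq:rec5}, and \eqref{eq:rec6} at indices $n$ and $n+K$ simultaneously, and keeping track of how the $h_n^{\downarrow}$ and $h_n^{\downarrow\downarrow}$ pieces recombine, should produce an explicit expression for $Y_{n+2K}$ as a polynomial in $\{Y_j : n\le j\le n+2K-1\}$ and $\mathcal{F}_K$. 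Carrying out this index bookkeeping cleanly is where I expect the bulk of the technical work to lie.
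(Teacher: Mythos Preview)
Your outline aligns with the paper's: reduce to $\mathcal{A}_{n+1}=\mathcal{A}_n$, and the substance is showing $Y_{2K}\in\mathcal{A}_0$ without any $Y_j^{-1}$. The tools you name --- the intertwining \eqref{eq:rec5} and palindromicity --- are exactly the ones used. But what you label ``index bookkeeping'' is where the real idea lives, and your proposal does not yet contain it.

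Specifically, the paper does \emph{not} try to extract $Y_{2K}$ from your identity $Y_{2K}\,h_0(Y^-_{1,K-1})=h_K(Y^-_{K+1,2K-1})\,Y_0$; inverting $h_0(\cdot)$ inside $\mathcal{A}_0$ is hopeless. Instead it works in the ambient algebra with $Y_{2K}=Y_K^{-1}h_K(Y^+_{2K-1,K+1})$, splits off the top monomial, and rewrites $Y_K^{-1}=Y_0-Y_K^{-1}h_0^{\downarrow}(Y^-_{1,K-1})$. The crux is then the structural fact (Lemma~\ref{lem:tech} together with Lemmas~\ref{lem:another}--\ref{lem:final})
\[
(Y^-_{1,K-1})^{m}(Y^+_{2K-1,K+1})^{m} \;=\; 1 \;+\; Y_K\cdot(\text{an explicit element of }\mathcal{A}_0),
\]
which lets every dangerous term $Y_K^{-1}(Y^-_{1,K-1})^{m}(Y^+_{2K-1,K+1})^{d_K}$ shed its $Y_K^{-1}$, leaving only residual pieces $Y_K^{-1}(Y^+_{2K-1,K+1})^{d_K-m}$. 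Palindromicity of $h_0=h_K$ is then used in a very concrete way: the sum $\sum_{m\ge 1}P_{0,m}(Y^+_{2K-1,K+1})^{d_K-m}$ reindexes to $\sum_{i\le d_K-1}P_{K,i}(Y^+_{2K-1,K+1})^{i}$ and cancels term-by-term against the lower part of the original expansion of $h_K$. The intertwining \eqref{eq:rec5} enters only inside those lemmas (e.g.\ Lemma~\ref{lem:flip}); invoking it at the top level does not by itself resolve the obstacle. So your direction is right, but the missing piece is discovering and proving the $1+Y_K\cdot(\ldots)$ factorization --- that is the argument, not bookkeeping.
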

\begin{proof}
It is enough to show that $\mathcal{A}_{n+1}=\mathcal{A}_n$. Without loss of generality, we let $n=0$. So, we try to show that $Y_{2K} \in \mathcal{A}_0$. By~\eqref{eq:rec4} and the definition of $h_K\left(x\right)$, we find
\begin{align} \notag
Y_{2K} &= Y_K ^{-1} h_K \left(Y_{2K-1,K+1} ^+\right) \\
	&= Y_K ^{-1}\sum_{i=0} ^{d_K-1} P_{K,i} \left(Y_{2K-1,K+1} ^+\right)^i +Y_K ^{-1}\left(Y_{2K-1,K+1} ^+\right)^{d_K}, \label{eq:first}
\end{align}
as $h_K$ is a monic palindromic polynomial.

We would like to find an expression for $Y_K ^{-1}\left(Y_{2K-1,K+1} ^+\right)^{d_K}$. Using~\eqref{eq:rec1}, we calculate
\begin{align} \notag
Y_0 &=Y_K^{-1}\left(1+\sum_{m=1} ^{d_0} P_{0,m} \left(Y_{1,K-1} ^-\right)^m\right) \\ \notag
Y_K^{-1} &=Y_0-Y_K^{-1}\sum_{m=1} ^{d_0} P_{0,m} \left(Y_{1,K-1} ^-\right)^m \\ \notag
Y_K^{-1}\left(Y_{2K-1,K+1} ^+\right)^{d_K} 
	&=Y_0\left(Y_{2K-1,K+1} ^+\right)^{d_K}
	-Y_K^{-1}\sum_{m=1} ^{d_0} P_{0,m} \left(Y_{1,K-1} ^-\right)^m\left(Y_{2K-1,K+1} ^+\right)^{d_K} \\
	&=Y_0\left(Y_{2K-1,K+1} ^+\right)^{d_K}
	-Y_K^{-1}\sum_{m=1} ^{d_0} P_{0,m} \left(Y_{1,K-1} ^-\right)^m\left(Y_{2K-1,K+1} ^+\right)^{m}\left(Y_{2K-1,K+1} ^+\right)^{d_K-m}. \label{eq:long}
\end{align}
Now, we would like a formula for $\left(Y_{1,K-1} ^-\right)^m\left(Y_{2K-1,K+1} ^+\right)^{m}$.

\begin{lem} \label{lem:tech}
For $m \ge 0$, we have
\begin{align*}
\left(Y_{1,K-1} ^-\right)^m \left(Y_{2K-1,K+1}^+\right)^m
	&= 1+\sum_{s=0}^{m-1} \left(Y_{1,K-1} ^-\right)^s
		\left(\sum_{j=1} ^{K-1} Y_{1,j-1} ^- h_j ^{\downarrow} \left(Y_{K+j-1,j+1} ^+\right)Y_{K+j-1,K+1}^+\right)
		\left(Y_{2K-1,K+1}^+\right)^s.
\end{align*}
\end{lem}
\begin{proof}
We proceed by induction on $m$. The case $m=0$ is trivial. For the case $m=1$, we calculate $Y_{1,K-1} ^-Y_{2K-1,K+1}^+$.
\begin{prop}
For $0 \le l$,
\begin{align*}
Y_{1,l}^- Y_{K+l,K+1}^+ = 1+\sum_{j=1}^l Y_{1,j-1}^- h_j ^\downarrow\left(Y_{K+j-1,j+1}^+\right)Y_{K+j-1,K+1}^+
\end{align*}
\end{prop}
\begin{proof}
The base case $l=0$ simply reduces to $a_{0}b_{K}=1$, which checks. 
Now, assuming the inductive hypothesis,
\begin{align*}
Y_{1,l+1}^-Y_{K+l+1,K+1}^+
&= Y_{1,l}^- Y_{l+1} a_{l+1} b_{K+l+1} Y_{K+l+1} Y_{K+l,K+1}^+ \\
&= Y_{1,l}^- Y_{l+1} Y_{K+l+1} Y_{K+l,K+1}^+ \\
&= Y_{1,l}^- h_{l+1}\left(Y_{K+l,l+2}^+\right)Y_{K+l,K+1}^+ \\
&= Y_{1,l}^- h_{l+1}^\downarrow \left(Y_{K+l,l+2}^+\right)Y_{K+l,K+1}^+ + Y_{1,l}^-Y_{K+l,K+1}^+ \\
&= Y_{1,l}^- h_{l+1}^\downarrow \left(Y_{K+l,l+2}^+\right)Y_{K+l,K+1}^+ + 1+\sum_{j=1}^l Y_{1,j-1}^- h_j ^\downarrow\left(Y_{K+j-1,j+1}^+\right)Y_{K+j-1,K+1}^+\\
&= 1+\sum_{j=1}^{l+1} Y_{1,j-1}^- h_j ^\downarrow\left(Y_{K+j-1,j+1}^+\right)Y_{K+j-1,K+1}^+,
\end{align*}
and we have proved our proposition.
\end{proof}
By this proposition, we see
\begin{align*}
Y_{1,K-1}^- Y_{2K-1,K+1}^+ = 1+\sum_{j=1}^{K-1} Y_{1,j-1}^- h_j ^\downarrow\left(Y_{K+j-1,j+1}^+\right)Y_{K+j-1,K+1}^+,
\end{align*}
which takes care of our base case. Proceeding to the inductive step, we calculate
\begin{align*}
&\left(Y_{1,K-1} ^-\right)^{m+1} \left(Y_{2K-1,K+1}^+\right)^{m+1} \\
&= \left(Y_{1,K-1} ^-\right)^m Y_{1,K-1} ^-Y_{2K-1,K+1}^+ \left(Y_{2K-1,K+1}^+\right)^m \\
&= \left(Y_{1,K-1} ^-\right)^m\left(1+\sum_{j=1}^{K-1} Y_{1,j-1}^- h_j ^\downarrow\left(Y_{K+j-1,j+1}^+\right)Y_{K+j-1,K+1}^+\right)\left(Y_{2K-1,K+1}^+\right)^m \\
&= \left(Y_{1,K-1} ^-\right)^m\left(Y_{2K-1,K+1}^+\right)^m
+ \left(Y_{1,K-1} ^-\right)^m\left(\sum_{j=1}^{K-1} Y_{1,j-1}^- h_j ^\downarrow\left(Y_{K+j-1,j+1}^+\right)Y_{K+j-1,K+1}^+\right)\left(Y_{2K-1,K+1}^+\right)^m \\
&= 1+\sum_{s=0}^{m-1} \left(Y_{1,K-1} ^-\right)^s
		\left(\sum_{j=1} ^{K-1} Y_{1,j-1} ^- h_j ^{\downarrow} \left(Y_{K+j-1,j+1} ^+\right)Y_{K+j-1,K+1}^+\right)
		\left(Y_{2K-1,K+1}^+\right)^s \\
&+ \left(Y_{1,K-1} ^-\right)^m\left(\sum_{j=1}^{K-1} Y_{1,j-1}^- h_j ^\downarrow\left(Y_{K+j-1,j+1}^+\right)Y_{K+j-1,K+1}^+\right)\left(Y_{2K-1,K+1}^+\right)^m \\
&= 1+\sum_{s=0}^{m} \left(Y_{1,K-1} ^-\right)^s
		\left(\sum_{j=1} ^{K-1} Y_{1,j-1} ^- h_j ^{\downarrow} \left(Y_{K+j-1,j+1} ^+\right)Y_{K+j-1,K+1}^+\right)
		\left(Y_{2K-1,K+1}^+\right)^s,
\end{align*}
which completes our proof.
\end{proof}

\begin{lem} \label{lem:another}
For $0 \le l \le K$ and $ l+1 \le q \le K+1$, we have
\begin{align} \label{eq:lemeq}
Y_{1,l} ^- Y_{K+l,q}^+ = Y_K Y_{K-1,q} ^+ \prod_{t=1} ^l h_t \left(b_{q-1} ^{-1} Y_{q-1,t+1} ^+ Y_{K+t-1,q}^+\right)
\end{align}
\end{lem}
\begin{proof}
If $l=0$, then~\eqref{eq:lemeq} trivially holds, as the product is empty. Otherwise, note that
\begin{align*}
Y_{1,l} ^- Y_{K+l,q}^+ &= Y_{1,l-1} ^- Y_l a_{l} b_{K+l} Y_{K+l} Y_{K+l-1,q}^+ \\
	&= Y_{1,l-1} ^- Y_l Y_{K+l} Y_{K+l-1,q}^+ \\
	&= Y_{1,l-1} ^- h_l\left(Y_{K+l-1,l+1}^+\right) Y_{K+l-1,q}^+ \\
	&= Y_{1,l-1} ^- Y_{K+l-1,q}^+ h_l\left(b_{q-1}^{-1}Y_{q-1,l+1} ^+ Y_{K+l-1,q}^+\right).
\end{align*}
Repeating this, we find
\begin{align*}
Y_{1,l} ^- Y_{K+l,q}^+
	&= Y_{1,0}^- Y_{K,q}^+ \prod_{t=1} ^{l} h_{t} \left(b_{q-1} ^{-1} Y_{q-1,t+1} ^+ Y_{K+t-1,q}^+\right) \\
	&= a_0 b_K Y_K Y_{K-1,q}^+ \prod_{t=1} ^{l} h_{t} \left(b_{q-1} ^{-1} Y_{q-1,t+1} ^+ Y_{K+t-1,q}^+\right) \\
	&= Y_K Y_{K-1,q}^+ \prod_{t=1} ^{l} h_{t} \left(b_{q-1} ^{-1} Y_{q-1,t+1} ^+ Y_{K+t-1,q}^+\right).
\end{align*}
\end{proof}

\begin{lem} \label{lem:flip}
For $s\ge 0$, we have $\left(Y_{1,K-1} ^-\right)^s Y_K = Y_K\left(Y_{K-1,1} ^+\right)^s $.
\end{lem}
\begin{proof}
Using~\eqref{eq:rel1} and~\eqref{eq:rec2}, we note that
\begin{align*}
Y_{1,K-1} ^- Y_K &= a_{0} Y_1 a_{1} \cdots Y_{K-1} a_{K-1} Y_K \\
	&= a_{0} \left(Y_1 a_{1} \cdots Y_{K-1} a_{K-1} Y_K a_{K} \right) a_{K} ^{-1} \\
	&= b_{K}^{-1} \left(b_{K} Y_K b_{K-1} Y_{K-1} \cdots b_{1} Y_1 \right) b_{0} \\
	&= Y_K b_{K-1} Y_{K-1} \cdots b_{1} Y_1 b_{0} \\
	&= Y_K Y_{K-1,1} ^+.
\end{align*}
The general claim follows by induction.
\end{proof}

\begin{lem} \label{lem:final}
For $m \ge 0$,
\begin{align*}
\left(Y_{1,K-1} ^-\right)^m \left(Y_{2K-1,K+1}^+\right)^m 
 = 1+ Y_K \sum_{s=0} ^{m-1}\sum_{j=1} ^{K-1} \left(Y_{K-1,1} ^+\right)^s
		 Y_{K-1,j+1} ^+ A\left(j,s\right)
\end{align*}
where
\begin{equation}
A\left(j,s\right)=\left(\prod_{t=1} ^{j-1} h_t \left(b_{j} ^{-1} Y_{j,t+1} ^+ Y_{K+t-1,j+1}^+\right)\right)
h_j ^{\downarrow \downarrow} \left(Y_{K+j-1,j+1}^+\right)Y_{K+j-1,K+1}^+
\left(Y_{2K-1,K+1} ^+\right)^s.
\end{equation}
\end{lem}
\begin{proof}
From Lemma~\ref{lem:another},
\begin{align*}
Y_{1,j-1}^- h_j ^\downarrow\left(Y_{K+j-1,j+1}^+\right)&=Y_{1,j-1}^- Y_{K+j-1,j+1}^+ h_j ^{\downarrow \downarrow}\left(Y_{K+j-1,j+1}^+\right) \\
&=Y_K Y_{K-1,j+1}^+ \left(\prod_{t=1}^{j-1} h_t\left(b_{j}^{-1} Y_{j,t+1}^+Y_{K+t-1,j+1}^+\right)\right) h_j ^{\downarrow \downarrow}\left(Y_{K+j-1,j+1}^+\right),
\end{align*}
and so, by Lemma~\ref{lem:flip},
\begin{align*}
& \left(Y_{1,K-1} ^-\right)^s Y_{1,j-1}^- h_j ^\downarrow\left(Y_{K+j-1,j+1}^+\right) \\
&=\left(Y_{1,K-1} ^-\right)^s  Y_K Y_{K-1,j+1} \left(\prod_{t=1}^{j-1} h_t\left(b_{j}^{-1} Y_{j,t+1}^+Y_{K+t-1,j+1}^+\right)\right) h_j ^{\downarrow \downarrow}\left(Y_{K+j-1,j+1}^+\right) \\
&=Y_K \left(Y_{K-1,1} ^+\right)^s Y_{K-1,j+1}\left( \prod_{t=1}^{j-1} h_t\left(b_{j}^{-1} Y_{j,t+1}^+Y_{K+t-1,j+1}^+\right)\right) h_j ^{\downarrow \downarrow}\left(Y_{K+j-1,j+1}^+\right)
\end{align*}
Regrouping and substituting the above into the expression from Lemma~\ref{lem:tech} gives the desired result.
\end{proof}

We now have our final expression for $\left(Y_{1,K-1} ^-\right)^m\left(Y_{2K-1,K+1} ^+\right)^{m}$. So, by~\eqref{eq:long} and Lemma~\ref{lem:final}, we find 
\begin{align*}
Y_K^{-1}\left(Y_{2K-1,K+1} ^+\right)^{d_K}
	&=Y_0\left(Y_{2K-1,K+1} ^+\right)^{d_K}-Y_K^{-1}\sum_{m=1} ^{d_0} P_{0,m}\left(Y_{2K-1,K+1} ^+\right)^{d_K-m} \\
	&-Y_K^{-1}\sum_{m=1} ^{d_0} P_{0,m}\left(Y_K \sum_{s=0} ^{m-1}\sum_{j=1} ^{K-1}  \left(Y_{K-1,1} ^+\right)^s
		Y_{K-1,j+1} ^+ A\left(j,d_K+s-m\right) \right)\\
&=Y_0\left(Y_{2K-1,K+1} ^+\right)^{d_K}-Y_K^{-1}\sum_{m=1} ^{d_0} P_{0,m}\left(Y_{2K-1,K+1} ^+\right)^{d_K-m} \\
	&-\sum_{m=1} ^{d_0}\sum_{s=0} ^{m-1}\sum_{j=1} ^{K-1}  P_{0,m} \left(Y_{K-1,1} ^+\right)^s
		Y_{K-1,j+1} ^+ A\left(j,d_K+s-m\right).
\end{align*}
Now, by~\eqref{eq:first},
\begin{align*}
Y_{2K} &= Y_K ^{-1}\sum_{i=0} ^{d_K-1} P_{K,i} \left(Y_{2K-1,K+1} ^+\right)^i +Y_0\left(Y_{2K-1,K+1} ^+\right)^{d_K}-Y_K^{-1}\sum_{m=1} ^{d_0} P_{0,m}\left(Y_{2K-1,K+1} ^+\right)^{d_K-m} \\
	&-\sum_{m=1} ^{d_0}\sum_{s=0} ^{m-1}\sum_{j=1} ^{K-1}  P_{0,m} \left(Y_{K-1,1} ^+\right)^s
		Y_{K-1,j+1} ^+ A\left(j,d_K+s-m\right).
\end{align*}
Using the facts that  $h_0=h_K$ (so $d_0=d_K$ and $P_{0,i}=P_{K,i}$ for all $i$) and $h_0$ is palindromic, we find
\begin{align*}
\sum_{m=1} ^{d_0} P_{0,m} \left(Y_{2K-1,K+1}^+ \right)^{d_K-m} &=
\sum_{m=1} ^{d_0} P_{0,d_0-m} \left(Y_{2K-1,K+1}^+ \right)^{d_K-m} \\
&= \sum_{i=0} ^{d_0-1} P_{0,i} \left(Y_{2K-1,K+1}^+ \right)^{i}\\
&= \sum_{i=0} ^{d_K-1} P_{K,i} \left(Y_{2K-1,K+1}^+ \right)^{i}.
\end{align*}
We then get the desired cancellation of terms, and can write
\begin{align}\label{eq:Y2Kfinal}
Y_{2K}  &= Y_0\left(Y_{2K-1,K+1} ^+\right)^{d_K} -\sum_{m=1} ^{d_0}\sum_{s=0} ^{m-1}\sum_{j=1} ^{K-1} P_{0,m} \left(Y_{K-1,1} ^+\right)^s Y_{K-1,j+1} ^+ A\left(j,d_K+s-m\right).
\end{align}

We have finally shown that $Y_{2K} \in \mathcal{A}_0$, which proves our main theorem.
\end{proof}
As a corollary, we see that these recursions have the Laurent phenomenon: each $Y_n$ is a noncommutative Laurent polynomial in $Y_1,\dots,Y_K$.

\section{Maple programs}

The Maple package {\tt NonComChecker} was written to accompany this paper. It is freely available at {\tt math.rutgers.edu/\~{}russell2/papers/recursions13.html}.

The main function, {\tt VerifyPaper(H,x)}, inputs a $H$, a list of $K$ polynomials in a variable $x$, which are taken to be
$h_1\left(x\right),h_2\left(x\right),\dots, h_K\left(x\right) = h_0\left(x\right)$. It then simplifies
$Y_{2K}$ using a list of equations that it generates, including~\eqref{eq:rel1},~\eqref{eq:rec2},~\eqref{eq:rec3},~\eqref{eq:rec6}, and~\eqref{eq:rec4},
 and verifies that the result equals~\eqref{eq:Y2Kfinal}.

\section{Acknowledgments}
The author would like to thank Vladimir Retakh and Doron Zeilberger for illuminating discussions.

\end{document}